\theoremstyle{plain}
\newtheorem{theorem}{Theorem}[section]
\newtheorem{lemma}[theorem]{Lemma}
\theoremstyle{definition}
\newcommand{\MM}{\mathcal M}
\newcommand{\BM}{\overline{\mathcal M}}
\newcommand{\QQ}{\mathcal Q}
\newcommand{\calF}{\mathcal F}
\newcommand{\calE}{\mathcal E}
\newcommand{\OO}{\mathcal O}
\newcommand{\BQQ}{\overline{\mathcal Q}}
\newcommand{\BFF}{\overline{\mathcal F}}
\newcommand{\irr}{\operatorname{irr}}
\newcommand{\Pic}{\operatorname{Pic}}
\newcommand{\bbP}{\mathbb P}
\newcommand{\bbQ}{\mathbb Q}
\newcommand{\bbR}{\mathbb R}
\newcommand{\SL}{\operatorname{SL}}
\newcommand{\calG}{\mathcal G}
\begin{document}

\makeatletter
	\@namedef{subjclassname@2020}{%
	\textup{2020} Mathematics Subject Classification}
	\makeatother

\title[Invariants on the flex and gothic loci]{Dynamical invariants and intersection theory on the flex and gothic loci}

\author{Dawei Chen}
\address{Department of Mathematics, Boston College, Chestnut Hill, MA 02467, USA}
\email{dawei.chen@bc.edu}

\subjclass[2020]{14H10, 14H15, 32G15}
\keywords{plane cubics, quadratic differentials, $\SL_2(\bbR)$-invariant varieties, Lyapunov exponents}

\date{\today}

\thanks{Research partially supported by the National Science Foundation Grant DMS-2001040 and the Simons Collaboration Grant 635235}

\begin{abstract}
The flex locus parameterizes plane cubics with three collinear cocritical points under a projection, and the gothic locus arises from quadratic differentials with zeros at a fiber of the projection and with poles at the cocritical points.  The flex and gothic loci provide the first example of a primitive, totally geodesic subvariety of  moduli space and new $\SL_2(\bbR)$-invariant varieties in Teichm\"uller dynamics, as discovered by McMullen--Mukamel--Wright. In this paper we determine the divisor class of the flex locus as well as various tautological intersection numbers on the gothic locus. For the case of the gothic locus our result confirms numerically a conjecture of Chen--M\"oller--Sauvaget about computing sums of Lyapunov exponents for $\SL_2(\bbR)$-invariant varieties via intersection theory.  
\end{abstract}

\maketitle

%\setcounter{tocdepth}{1}
%\tableofcontents

%%%%%%%%%%%%%%%%%
\section{introduction}
\label{sec:intro}
%%%%%%%%%%%%%%%%%

Let $\calF\subset \MM_{1,3}$ be the flex locus parameterizing elliptic curves with three marked points $(E, p_1, p_2, p_3)$ where $E$ admits a plane cubic model with a projection $\pi$ from a point $s\in \bbP^2$ such that $p_1, p_2, p_3$ are three collinear cocritical points of $\pi$. Here $p_i$ being a cocritical point means the corresponding fiber of $\pi$ is of type $p_i + 2q_i$ where $q_i$ is a critical point, i.e. $\pi$ is ramified at $q_i$.  Let $\QQ \calF \subset \QQ_{1,3}$ be the locus of quadratic differentials $(E, q)$ over $\calF$ such that ${\rm div}(q) = z_1 + z_2 + z_3 - p_1 - p_2 - p_3$ where $z_1 + z_2 + z_3$ is a fiber of $\pi$. Lifting via the canonical double cover identifies $\QQ \calF \subset \QQ_{1,3}$ with the gothic locus $\Omega \calG \subset \Omega\MM_{4}(2^3, 0^3)$ of abelian differentials on curves of genus four where the preimage of each $z_i$ is a double zero and the preimage of each $p_i$ is an ordinary point (i.e. a zero of order zero). Since $\QQ \calF \cong \Omega \calG$, we also refer to $\QQ \calF$ as the gothic locus.  

The flex and gothic loci were discovered by McMullen--Mukamel--Wright \cite{MMW}. They are not only interesting objects in classical algebraic geometry, but they also possess remarkable  
properties from the viewpoint of Teichm\"uller dynamics. The flex locus gives the first example of a primitive, totally geodesic subvariety of moduli space, and the gothic locus provides new $\SL_2(\bbR)$-invariant varieties. In this paper we focus on another fascinating interplay by using intersection theory to study dynamical invariants on these loci.  

Let $\BFF\subset \BM_{1,3}$ be the closure of $\calF$ in the Deligne-Mumford compactification. Our first result determines the divisor class of $\BFF$ in terms of the standard generators of 
$\Pic_{\bbQ}(\BM_{1,3})$. 

\begin{theorem}
\label{thm:divisor}
The divisor class of $\BFF$ in $\BM_{1,3}$ is 
\begin{eqnarray}
\label{eq:F}
 \BFF = \frac{4}{3} \delta_{\irr} + 4 \big(\delta_{0; \{1,2\}} + \delta_{0; \{1,3\}} + \delta_{0; \{2,3\}}\big) + 4 \delta_{0; \{1,2, 3\}}. 
 \end{eqnarray}
\end{theorem}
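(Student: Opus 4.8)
The plan is to expand $[\BFF]$ in the standard generators of $\Pic_{\bbQ}(\BM_{1,3})$ and to pin the coefficients down by intersecting with test curves. Recall that on $\BM_{1,n}$ one has $\lambda=\tfrac1{12}\delta_{\irr}$ and $\psi_i=\lambda+\sum_{S\ni i}\delta_{0;S}$, so $\Pic_{\bbQ}(\BM_{1,3})$ is freely generated by the five boundary classes $\delta_{\irr},\delta_{0;\{1,2\}},\delta_{0;\{1,3\}},\delta_{0;\{2,3\}},\delta_{0;\{1,2,3\}}$. Since $\calF$ is manifestly invariant under permuting the three marked points, its class has the form $a\,\delta_{\irr}+b\,(\delta_{0;\{1,2\}}+\delta_{0;\{1,3\}}+\delta_{0;\{2,3\}})+c\,\delta_{0;\{1,2,3\}}$, and it remains to determine the three rational numbers $a,b,c$ (which should come out to be $4/3,4,4$).

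The key input is an enumerative reformulation of membership in $\calF$. For $(E,p_1,p_2,p_3)\in\MM_{1,3}$, the line bundle $H:=\OO_E(p_1+p_2+p_3)$ determines, uniquely up to projective equivalence, the plane cubic model $C\subset\bbP^2$ in which $p_1,p_2,p_3$ are collinear; a projection $\pi_s$ of $C$ from $s\in\bbP^2\setminus C$ has $p_i$ as a cocritical point exactly when $s$ lies on a line through $p_i$ tangent to $C$ at some $q_i\ne p_i$ with $2q_i\sim H-p_i$, that is, when $s\in T_{q_i}C$ for $q_i$ one of the four points with $2q_i\sim H-p_i$ (equivalently, $T_{q_i}C$ one of the four tangent lines to $C$ drawn from $p_i$). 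Hence $(E,\vec p)\in\calF$ precisely when, for some choice of $q_1,q_2,q_3$, the lines $T_{q_1}C,T_{q_2}C,T_{q_3}C$ are concurrent — equivalently, when the $3\times3$ determinant of the gradient of the cubic form at $q_1,q_2,q_3$ vanishes. This is the equation cutting out $\calF$, and it turns intersections of $\BFF$ with explicit test curves into plane-geometry enumerations.

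I would then determine $a,b,c$ from three test curves. For $a$, take a general pencil of plane cubics three of whose nine base points $p_1,p_2,p_3$ are collinear, so that $p_1+p_2+p_3$ is the hyperplane class of every member; the resulting curve $R\subset\BM_{1,3}$ meets $\delta_{\irr}$ in its $12$ nodal members (general points of $\delta_{\irr}$) and no other boundary divisor, whence $\BFF\cdot R=12a$ equals the number of members of the pencil carrying a concurrent triple of tangent lines of the above type. For $b$, fix a general plane cubic $C$ of hyperplane class $m$ and a general $p_1\in C$, and let $p_2$ vary over $C$ with $p_3$ determined by $p_1+p_2+p_3=m$; this curve meets $\delta_{0;\{1,2\}},\delta_{0;\{1,3\}},\delta_{0;\{2,3\}}$ in an explicit finite number of points and nothing else, so $\BFF\cdot R$ is a known multiple of $b$ equal to a concurrency count on the fixed cubic $C$. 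For $c$, work near $\delta_{0;\{1,2,3\}}$, for instance with the curve $\{\mathrm{pt}\}\times\BM_{0,4}\subset\delta_{0;\{1,2,3\}}$ (for which the conjectured formula predicts $\BFF\cdot R=3b-c$) or with a one-parameter family of smooth $3$-pointed elliptic curves whose markings collide; either way one needs to describe the limiting tangent-line configurations as $p_1,p_2,p_3$ come together.

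The main obstacle is the bookkeeping. On the enumerative side the determinantal condition acquires spurious zeros wherever two tangent lines coincide (colliding $q_i$) or a $q_i$ degenerates onto $p_i$ (a flex of $C$), and one must track carefully the $4^3$ choices of the $q_i$ and the resulting overcounting; this is where the numerical values of $a,b,c$ get pinned down. Harder still is the boundary analysis needed for $c$, and useful as a check on $a$: over $\delta_{\irr}$ the cubic acquires a node and the $[2]$-map on the Jacobian degenerates, so the four square roots of each $H-p_i$ collapse to two and the supply of tangent lines drops, which I expect to be the origin of the $1/3$ in the coefficient of $\delta_{\irr}$; and over the $\delta_{0;S}$ the plane model must be followed through the stable reduction as marked points bubble off. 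A uniform way to organize all of this is to push forward the concurrency divisor from a suitable compactification of the degree-$4^3$ cover $(E,\vec p)\mapsto(E,\vec p,q_1,q_2,q_3)$ of $\BM_{1,3}$, compute its class there in terms of $\lambda$, the $\psi_i$ and the boundary, and then subtract the contributions of the spurious components and of the ramification of the cover over the boundary; I expect this last step to be the bulk of the work.
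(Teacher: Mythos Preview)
Your overall strategy---use the $S_3$-symmetry to write $[\BFF]=a\,\delta_{\irr}+b\sum\delta_{0;\{i,j\}}+c\,\delta_{0;\{1,2,3\}}$ and then determine $a,b,c$ by test curves---is exactly what the paper does. The difference is entirely tactical, in which test curves are chosen and how their intersections with $\BFF$ are counted.

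You propose to work mainly with interior families (e.g.\ a pencil of cubics through three collinear base points for $a$) and to count membership in $\calF$ via the determinantal concurrency condition on the tangent lines $T_{q_i}C$, tracking the $4^3$ choices of the $q_i$ and subtracting spurious contributions. This is a legitimate program, but as you yourself note, the bookkeeping is substantial. The paper sidesteps almost all of it by reversing the order: it first describes $\BFF\cap\Delta_{0;\{1,2,3\}}$ and $\BFF\cap\Delta_{0;\{i,j\}}$ set-theoretically using admissible triple covers, and then chooses test curves lying \emph{inside} these boundary divisors (a fixed rational tail carrying the markings, attached to a moving pencil of elliptic tails). On such curves the intersection with $\BFF$ reduces to the trivial count that a pencil of plane cubics realizes a given $j$-invariant $12$ times, giving two linear relations $12a-c=12$ and $12a-b=12$ with no concurrency computation at all. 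The interior test curve you describe for $b$ (a fixed cubic with one fixed marking and two moving collinear markings) does appear in the paper as well, but there the count is read off directly from the degree of the Hessian and of the satellite Cayleyan of $E$ as in \cite{MMW}, rather than via the determinantal equation. Your $\{\mathrm{pt}\}\times\BM_{0,4}$ curve for $c$ is also one of the paper's test curves, used only as a cross-check once $b$ and $c$ are known.

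In short: your plan is correct and would succeed, but the paper's boundary-first approach is considerably shorter and avoids the excess-intersection and overcounting issues you anticipate; the price it pays is the preliminary admissible-cover analysis of the boundary of $\BFF$, which in genus one is quite mild.
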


Let $\BQQ_{1,3}$ be the (twisted) quadratic Hodge bundle on $\BM_{1,3}$ whose fiber over $(E, p_1, p_2, p_3)$ is $H^0(E, K^{\otimes 2}(p_1+p_2+p_3))$, i.e. the space of quadratic differentials $q$ with at worst simple poles at the $p_i$, where $K$ is the dualizing line bundle of the stable curve $E$. Denote by $\QQ\BFF$ the closure of the gothic locus $\QQ \calF$ in $\BQQ_{1,3}$. We use $\bbP \QQ\BFF\subset \bbP\BQQ_{1,3}$ to denote their projectivizations. 

Let $\OO(-1)$ be the tautological bundle on $\bbP\BQQ_{1,3}$ whose fiber over $(E, [q], p_1, p_2, p_3)$ is spanned by $q$, and let $\eta$ be the first Chern class of $\OO(-1)$. In \cite[Conjecture 4.3]{CMS} there was a conjectural formula about the sum of Lyapunov exponents for an arbitrary $\SL_2(\bbR)$-invariant variety in terms of intersection numbers of divisor classes $\eta$, $\lambda_1$, and $\psi_i$, where $\lambda_1$ is the first Chern class of the (ordinary) Hodge bundle and $\psi_i$ is the cotangent line bundle class associated to the $i$-th marked point. For the gothic locus as an 
$\SL_2(\bbR)$-invariant variety, all of its periods are absolute, hence no $\psi$-classes are involved.  In this case the conjecture reduces to 
$$L^{+}(\QQ\calF) = 2 \frac{\int_{\bbP\QQ\BFF} \eta^2 \lambda_1}{\int_{\bbP\QQ\BFF} \eta^3} $$
where the factor $2$ is due to the canonical double cover when lifting to the actual gothic locus $\Omega \calG$ as explained in \cite{CMS}, and the positive sign here is because we work with $\OO(-1)$ instead of its dual bundle in~\cite{CMS}.  

Our next result determines the above intersection numbers.     

\begin{theorem}
\label{thm:L} 
We have $\int_{\bbP\QQ\BFF} \eta^2 \lambda_1 = -1/2$, $\int_{\bbP\QQ\BFF} \eta^3 = -13/6$, and 
\begin{eqnarray}
\label{eq:L} 
 2 \frac{\int_{\bbP\QQ\BFF} \eta^2 \lambda_1}{\int_{\bbP\QQ\BFF} \eta^3} =  \frac{6}{13}. 
 \end{eqnarray}
\end{theorem}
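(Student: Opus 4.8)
The plan is to compute the two intersection numbers $\int_{\bbP\QQ\BFF}\eta^2\lambda_1$ and $\int_{\bbP\QQ\BFF}\eta^3$ by realizing $\bbP\QQ\BFF$ as a (possibly iterated) blow-up or finite cover related to the flex locus $\BFF$, pushing the classes $\eta$, $\lambda_1$ down to $\BM_{1,3}$, and then intersecting against the divisor class computed in Theorem~\ref{thm:divisor}. Concretely, I would first analyze the projection $p\colon \bbP\QQ\BFF\to\BFF$: over the interior $\calF$ this is generically a $\bbP^0$-bundle, since the condition $\divisor(q)=z_1+z_2+z_3-p_1-p_2-p_3$ with $z_1+z_2+z_3$ a fiber of $\pi$ pins down $q$ up to scale, so in fact $\QQ\calF\to\calF$ is birational and $\bbP\QQ\BFF$ maps birationally onto $\BFF$. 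Thus $\eta$ restricted to $\bbP\QQ\BFF$ is the class of a specific line bundle on (a modification of) $\BFF$: the line bundle whose fiber is $H^0(E,K^{\otimes 2}(p_1+p_2+p_3))$ in the direction spanned by that distinguished $q$. I would identify this line bundle explicitly in terms of $\psi$-classes and boundary divisors — the key point being that $q$ has a specified divisor, so $\OO(-1)$ is forced to be $K^{\otimes 2}(\sum p_i)$ twisted by the vanishing locus, i.e. $\eta = -\text{(class of the section)}$, expressible via $\psi_i$, $\lambda_1$, and the $\delta$'s on $\BM_{1,3}$.

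The second step is to compute $\lambda_1$ on $\bbP\QQ\BFF$, which is just the pullback of $\lambda_1$ from $\BM_{1,3}$ (the ordinary Hodge bundle is pulled back along the map to $\BM_{1,3}$ forgetting the differential), together with the standard relations on $\BM_{1,3}$: $\lambda_1 = \frac{1}{12}(\delta_{\irr} + \dots)$ in genus one, and $\psi_i$, $\delta_{0;S}$ relations. Once $\eta$ and $\lambda_1$ are written as explicit $\bbQ$-divisor classes on $\BFF\subset\BM_{1,3}$, the products $\eta^2\lambda_1$ and $\eta^3$ become intersection numbers of the form $\int_{\BM_{1,3}}(\text{degree-2 class})\cdot\BFF$, which one evaluates using Theorem~\ref{thm:divisor} and the known intersection pairings among $\psi_i$, $\lambda_1$, $\delta_{\irr}$, $\delta_{0;S}$ on $\BM_{1,3}$ (a surface, so these are genuine numbers). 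Finally, dividing gives $2\cdot(-1/2)/(-13/6) = 6/13$, matching~\eqref{eq:L}.

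The main obstacle I anticipate is the boundary analysis: the birational map $\bbP\QQ\BFF\dashrightarrow\BFF$ need not be an isomorphism over the boundary $\partial\BFF$, because the limiting quadratic differential on a nodal stable curve may degenerate (its distinguished divisor $z_1+z_2+z_3$ may collide with the $p_i$ or run into the node), so $\OO(-1)$ can acquire correction terms supported on boundary strata. Getting $\eta$ right therefore requires understanding the closure $\QQ\BFF$ via the theory of twisted/limit differentials — essentially redoing, for quadratic differentials with the gothic incidence conditions, the kind of boundary bookkeeping used to prove Theorem~\ref{thm:divisor}. A secondary subtlety is that $\QQ\calF\to\calF$ might fail to be birational if the fiber $z_1+z_2+z_3$ of $\pi$ is not unique (i.e.\ if there are finitely many admissible $q$'s), in which case one must track the degree of this cover; the clean value $-13/6$ strongly suggests a small correction of this type. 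Once the precise expression $\eta = a\,\psi_1 + \dots + (\text{boundary})$ on the relevant model is pinned down, the rest is a routine intersection-number computation on $\BM_{1,3}$ using Theorem~\ref{thm:divisor}.
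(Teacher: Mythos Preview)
Your proposal rests on a dimension error that breaks the strategy. You claim that $p\colon \bbP\QQ\BFF\to\BFF$ is generically a $\bbP^0$-bundle because ``the condition $\divisor(q)=z_1+z_2+z_3-p_1-p_2-p_3$ with $z_1+z_2+z_3$ a fiber of $\pi$ pins down $q$ up to scale.'' But in the definition of $\QQ\calF$ the fiber $z_1+z_2+z_3$ is \emph{any} fiber of the projection $\pi$, not a fixed one; there is a full $\bbP^1$ of such fibers. Hence $\bbP\QQ\BFF\to\BFF$ has one-dimensional fibers and $\dim\bbP\QQ\BFF=3$, not $2$. (This is also forced by the statement itself: if $\bbP\QQ\BFF$ were a surface then $\eta^3$ would vanish, contradicting $\eta^3=-13/6$.) Your parenthetical that $\BM_{1,3}$ is ``a surface'' is likewise off by one: $\dim\BM_{1,3}=3$. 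So the plan of writing $\eta$ as a divisor pulled back from $\BFF$ and computing $\int(\text{degree-2 class})\cdot\BFF$ on $\BM_{1,3}$ does not make sense as written.

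What the paper actually does is close to your intuition but one level deeper. The map $f\colon\bbP\QQ\BFF\to\BFF$ admits a distinguished \emph{section} $T$ (take the special fiber $z_1+z_2+z_3=p_1+2q_1$), and on $\bbP\QQ\BFF$ one has $\eta=\psi_{p_1}-T$. A second relation $\eta T=\psi_{p_2}T-\beta_{\Lambda_1}-2\beta_{\Lambda_2}-2\beta_{\Lambda_3}$, obtained by restricting to $T$ and analyzing level graphs, lets one iterate: $f_*(\eta^3)$ and $f_*(\eta^2\lambda_1)$ then reduce to products of $\psi_{p_i}$ and $\lambda_1$ against $\BFF$ on $\BM_{1,3}$, plus explicit correction terms $\eta\beta_{\Lambda_i}$ and $\lambda_1\beta_{\Lambda_i}$ coming from the boundary. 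Only at that stage does Theorem~\ref{thm:divisor} enter. The missing idea in your proposal is precisely this section $T$ and the relation $\eta=\psi_{p_1}-T$; without it you cannot get from the threefold $\bbP\QQ\BFF$ down to the surface $\BFF$.
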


We remark that the value $6/13$ matches with computer experiments by using square-tiled surfaces of large degree $d$ in the gothic locus (shared with the author by M\"oller). For instance for $d = 188$, one approximation gives $0.46155\ldots$ while $6/13 \approx 0.46153\ldots$ Therefore, our result provides a numerical confirmation of \cite[Conjecture 4.3]{CMS} for the gothic locus. Moreover, the Masur--Veech volume of the gothic locus was computed by Torres-Teigell~\cite{T} via counting square-tiled surfaces. Note that the denominator $13$ in~\eqref{eq:L} also appears in~\cite[Theorem 1.1]{T}. This can be explained by the fact that the top intersection number of $\eta$ on an $\SL_2(\bbR)$-invariant variety with no REL deformation corresponds to the Masur--Veech volume (up to a volume normalization factor, see~\cite[Section 2]{CMS} and \cite[Section 3]{KN}). 

For recent developments about computing invariants in Teichm\"uller dynamics via intersection theory, such as Masur--Veech volumes and (area) Siegel--Veech constants, see~\cite{CMSZ, CMS} as well as the references therein. Several other special $\SL_2(\bbR)$-invariant varieties analogous to the gothic locus were discovered by Eskin--McMullen--Mukamel--Wright \cite{EMMW} which we plan to study in future work.  

\subsection*{Acknowledgements} The author is grateful to Martin M\"oller for sharing the data of Lyapunov exponents of square-tiled surfaces as well as related discussions.  

%%%%%%%%%%%%%%%%%
\section{Divisor class of the flex locus}
\label{sec:divisor}
%%%%%%%%%%%%%%%%%

We first study the intersection of the flex locus $\BFF$ with the boundary divisors of $\BM_{1,3}$. In order to do that, we interpret the construction of the flex locus via two (admissible) covers as follows. For $(E, p_1, p_2, p_3) \in \BFF$, there exist two (admissible) covers $\pi$ and $\phi$ of degree three to rational curves, whose domain curves (with marked points $p_i$ and $q_j$) have stable model $(E, p_1, p_2, p_3)$ after forgetting the $q_j$, such that each $p_i + 2q_i$ is a fiber of $\pi$ and such that $p_1 + p_2 + p_3$ and $q_1 + q_2 + q_3$ are fibers of $\phi$, where the two $g^1_3$ giving $\pi$ and $\phi$ are linearly equivalent, i.e. they provide a $g^2_3$ that realizes $E$ as a plane cubic with the desired configuration.  

\subsection*{Intersection of $\BFF$ with $\Delta_{0; \{1,2,3\}}$}
Let $\Delta_{0; \{1,2,3\}}$ be the boundary divisor of $\BM_{1,3}$ whose general element parameterizes an elliptic curve $E$ union a rational component $R$ at a node $r$ where the three marked points are contained in $R$. Using the admissible cover description, one can easily check that a general element in $\BFF\cap \Delta_{0; \{1,2,3\}}$ has two types. 

\begin{enumerate}[(I)]

\item One case is that $E$ is determined as a double cover of $R$ branched at $p_1, p_2, p_3$ and the node $r$.  In this case, the (implicitly marked points) $q_1, q_2, q_3$ are the ramification points, i.e. $2q_i \sim 2r$ in $E$. We have $3r \sim p_1+p_2+p_3 \sim q_1 + q_2 + q_3 \sim p_i + 2q_i$, where $p_i = r$ if we blow down $R$. In other words, this case occurs when the three cocritical points collide at a flex point in the plane cubic model of $E$.    

\item Alternatively, $q_1, q_2, q_3$ can be contained in $R$. Then $R$ admits a triple cover of $\bbP^1$ such that 
$p_1 + 2q_1$, $p_2 + 2q_2$, $p_3 + 2q_3$ and $2r + t$ (for some $t$ in $R$) belong to the fibers of a $g^1_3$ on $R$, where $E$ admits a double cover induced by $2r$ together with a $\bbP^1$ attached to $R$ at $t$ to form the other part of the admissible cover. The other $g^1_3$ will also have $2r$ contained in a fiber. In this case the plane cubic model contracts $E$ and maps $R$ to a singular plane cubic with a cusp at $r$. To see this explicitly, consider a cuspidal model of \cite[(2.10)]{MMW} by taking $b(x) = 3$ and $c(x) = -4x^3$ therein. Then one can check that the cocritical line away from the cusp (the one denoted by $L'_2$ in \cite[(2.11)]{MMW}) cuts out three points $p_1, p_2, p_3$ that together with the cusp $r$ have two distinct cross-ratios in $R\cong \bbP^1$.  In other words, fixing $(p_1, p_2, p_3) = (0, 1, \infty)$ in $R$, let $c$ be the number of such special positions of $r$ in $R$ where the above configuration holds, and then $c = 2$. 
Later we will also provide a sanity check for $c=2$ by using test curves.  
\end{enumerate}

\subsection*{Intersection of $\BFF$ with $\Delta_{0; \{i, j\}}$} Let $\Delta_{0; \{i, j\}}$ be the boundary divisor of $\BM_{1,3}$ whose general element parameterizes an elliptic curve $E$ union a rational component $R$ at a node $r$ such that the marked points $p_i$ and $p_j$ are contained in $R$ and the remaining one is contained in $E$. One checks that a general element in $\BFF\cap \Delta_{0;\{i,j \}}$ parameterizes 
a one-marked irreducible nodal rational curve $N$ (as a degeneration of $E$) union $R$ at a node $r$. More precisely, suppose $N$ contains $p_1$ and $R$ contains $p_2, p_3$ for $\{i, j \} = \{2,3\}$. The plane cubic model contracts $R$ so that $p_2 = p_3 = r$. In this case the tangent line to $N$ at $r$ goes through $p_1$, the projection of $N$ from $p_1$ has another tangent line at $q_1$, and $q_2, q_3$ coincide with the non-separating node $n$ of $N$. The projection center $s$ is the intersection of the lines $\overline{p_1q_1}$ and $\overline{n r}$. 

\subsection*{Test curves}
Let $\Delta_{\irr}$ be the boundary divisor of $\BM_{1,3}$ whose general element parameterizes an irreducible rational nodal curve.  We use $\delta_{\bullet}$ to denote the  divisor class of each $\Delta_{\bullet}$. The rational Picard group of $\BM_{1,3}$ is generated by $\delta_{\irr}$, $\delta_{0; \{i,j\}}$, and $\delta_{0; \{1,2, 3\}}$. Suppose the divisor class of $\BFF$ in $\BM_{1,3}$ is 
$$  \BFF =  d_1 \delta_{\irr} + d_2 \big(\delta_{0; \{1,2\}} + \delta_{0; \{1,3\}} + \delta_{0; \{2,3\}}\big) + d_3 \delta_{0; \{1,2, 3\}}. $$
Based on the above descriptions, we will use a number of test curves to determine the coefficients $d_i$ and provide sanity checks. 

\begin{enumerate}[(i)]
\item
Take the family $B_1$ of a rational curve $R$ union a pencil of plane cubics at a node $r$ where $R$ contains $p_1, p_2, p_3$ in general position.  Then we have 
$$ B_1 \cdot \delta_{\irr} = 12, $$
$$ B_1 \cdot \delta_{0;\{ i,j\}} = 0, $$
$$ B_1 \cdot \delta_{0;\{ 1,2,3\}} = -1, $$
$$ B_1 \cdot \BFF= 12, $$
$$ 12 d_1 - d_3 = 12. $$ 
The intersection number with $\BFF$ is due to that the moduli of $E$ is determined by the double cover ramified over the $p_i$ and $r$ (described as type (I) in the intersection of $\BFF$ with $\Delta_{0;\{ 1,2,3\}}$) and a pencil of plane cubics mapping to $\BM_{1,1}$ has degree $12$. 

\item
Take the family $B_2$ by attaching a pencil of plane cubics to a rational curve $R$ at a node $r$ where $R$ contains $p_2, p_3$ and we choose a base point of the pencil to be $p_1$.  Then we have 
$$  B_2 \cdot \delta_{\irr} = 12, $$
$$  B_2 \cdot \delta_{0;\{2,3\}} = -1, $$
$$  B_2 \cdot \delta_{0;\{ 1,2,3\}} = B_3 \cdot \delta_{0;\{1,2\}} = B_3 \cdot \delta_{0;\{1,3\}} = 0, $$
$$  B_2 \cdot \BFF = 12, $$
$$ 12d_1 - d_2 = 12. $$
The intersection number with $\BFF$ comes from the $12$ rational nodal curves parameterized in the pencil.  

\item
Take the family $B_3$ arising from a general plane cubic $E$ with a general marked point $p_1$ and a varying line $L$ through $p_1$ that cuts out $p_2, p_3$ in $E$. In order to label $p_2$ and $p_3$, we need to pass to a double cover, i.e. $B_3\cong E$ is a double cover of $\bbP^1$ (parameterizing the varying lines through $p_1$).   Then we have 
$$  B_3 \cdot \delta_{\irr} = 0, $$
$$  B_3 \cdot \delta_{0;\{2,3 \}} = 4, $$
$$  B_3 \cdot \delta_{0;\{1,2 \}} =  B_4 \cdot \delta_{0;\{1,3 \}}  = 1, $$
$$  B_3 \cdot \delta_{0;\{1,2,3\}} = 0, $$
$$  B_3 \cdot \BFF = 24, $$
$$ 6d_2 = 24. $$ 
To see the intersection number with $\BFF$, note that there are four cocritical lines through $p_1$ that can cut out $p_1 + 2q_1$ and the Hessian ${\rm H}(E)$ of $E$ has degree three (see \cite[Section 2]{MMW}). Hence each of the four cocritical lines meets ${\rm H}(E)$ at three points, providing in total $12$ choices for the projection center $s$. The additional factor $2$ is due to the labeling of $p_2$ and $p_3$. 

Note that by now we can already determine the divisor class of $\BFF$ as claimed in Theorem~\ref{thm:divisor}. Nevertheless, we will work out a few more test curves to do a sanity check as well as to determine the special value $c$ appearing in the analysis for the intersection of $\BFF$ with $\Delta_{0; \{1,2,3 \}}$ of type (II).  

\item
Take the family $B_4$ arising from a rational curve $R$ attached to a general elliptic curve $E$ at a node $r$ where $R$ contains $(p_1, p_2, p_3) = (0, 1, \infty)$ and $r$ varies in $R$.  Then we have 
$$ B_4 \cdot \delta_{\irr} = 0, $$
$$ B_4 \cdot \delta_{0;\{i,j\}} = 1, $$
$$ B_4 \cdot \delta_{0;\{1,2,3\}} = -1, $$
$$ B_4 \cdot \BFF = 6 + c, $$
$$ 3d_2 - d_3 = 6 + c. $$ 
To see the intersection number with $\BFF$, first note that the four branched points of the double cover from $E$ to $\bbP^1$ induced by $2r$ has 
six distinct cross-ratios, which contributes $6$ by the description of type (I) for the intersection of $\BFF$ with $\Delta_{0;\{ 1,2,3\}}$. Moreover, 
when the varying node $r$ meets the $c$ special positions in the description of type (I) for the intersection of $\BFF$ with $\Delta_{0;\{ 1,2,3\}}$, we obtain another contribution equal to $c$. Therefore, we conclude that $c = 2$ (using $d_2 = d_3 = 4$). 

\item
Take the family $B_5$ arising from a general plane cubic $E$ with a flex point $p_1$ and a varying line $L$ through $p_1$ that cuts out $p_2, p_3$ in $E$. Then we have 
$$ B_5 \cdot \delta_{\irr} = 0, $$
$$ B_5 \cdot \delta_{0;\{2,3\}} = 3, $$
$$ B_5 \cdot \delta_{0;\{1,2\}} = B_5 \cdot \delta_{0;\{1,3\}} = 0, $$
$$ B_5 \cdot \delta_{0,\{1,2,3\}} = 1, $$ 
$$ B_5 \cdot \BFF = 16, $$
$$ 3d_2 + d_3 = 16. $$
Note that when $p_2$ and $p_3$ both coincide with $p_1$, we obtain a rational comonent $R$ union $E$ at a node $r$ such that $2r\sim 2p_1 \sim p_2 + p_3$ gives the same $g^1_2$ on $R$, whose elliptic partner (described as type (I) in the intersection of $\BFF$ with $\Delta_{0;\{ 1,2,3\}}$) can be avoided by choosing a general $E$. To see the intersection number with $\BFF$, note that 
the Hessian ${\rm H}(E)$ meets $E$ at the nine flexes including $p_1$. Hence each of the three (non-flex) cocritical lines through $p_1$ meets ${\rm H}(E)$ at two other points besides $p_1$, providing $6$ choices for the projection center $s$, and we also need to choose a labeling of $p_2$ and $p_3$. The additional $4$ is from the case when $q_1 = p_1$, i.e. the flex line through $p_1$ becomes a special cocritical line. 

\item
Take the family $B_6$ by varying a line through a general point away from a general plane cubic $E$ to cut out $p_1, p_2, p_3$ in $E$ (after making a base change of degree six in order to label the $p_i$).  Then we have 
$$ B_6 \cdot \delta_{\irr} = 0, $$
$$ B_6 \cdot \delta_{0;\{i,j\}} = 18, $$
$$ B_6 \cdot \delta_{0;\{1,2,3 \}} = 0, $$
$$ B_6 \cdot \BFF = 72, $$
$$ 18d_2 = 72. $$
The intersection number with $\BFF$ is due to the base change and the fact that the satellite Cayleyan parameterizing cocritical lines of $E$ in the dual plane has degree $12$ (see \cite[Section 2]{MMW}). 

\item
Take the family $B_7$ by varying $p_1$ along a general elliptic curve $E$ attached to a rational curve $R$ containing $p_2$ and $p_3$. Then we have 
$$ B_7 \cdot \delta_{\irr} = 0, $$
$$ B_7\cdot \delta_{0;\{2,3\}} = -1, $$
$$ B_7\cdot \delta_{0;\{1,2\}} = B_7\cdot \delta_{0;\{1,3\}} = 0, $$
$$ B_7\cdot \delta_{0;\{1,2,3\}} = 1, $$
$$ B_7 \cdot \BFF = 0, $$
$$ d_3 - d_2 = 0. $$
\end{enumerate}

We can also verify intersection transversality for the test curves with $\BFF$ without much difficulty. For instance, $B_6$ is a freely moving curve in $\BM_{1,3}$ by varying the moduli of $E$ and the base point of the lines that cut out the marked points.  Hence a general choice of $B_6$ meets $\BFF$ transversally at its generic points. Similarly one can check this way for test curves contained in the boundary. For instance, $B_1$ is a freely moving curve in $\Delta_{0; \{1,2,3\}}$ by varying the moduli of $E$ and the marked points in $R$. 

In summary, the above test curves determine and cross check the divisor class of $\BFF$ as claimed in~\eqref{eq:F}.   

%%%%%%%%%%%%%%%%%
\section{Intersection theory on the gothic locus}
\label{sec:gothic}
%%%%%%%%%%%%%%%%%

Recall that $\bbP\QQ\BFF\subset \bbP\BQQ_{1,3}$ is the closure of the locus $(E, [q], p_1, p_2, p_3)$ where $(E, p_1, p_2, p_3) \in \calF$ and 
${\rm div}(q) = z_1 + z_2 + z_3 - p_1 - p_2 - p_3$ with $z_1 + z_2 + z_3$ as a fiber of the $g^1_3$ given by the projection of $E$ from $s$.  Note that here the $p_i$ are labeled but we do not label the $z_i$.

Let $T\subset \bbP\QQ\BFF$ be the closure of the locus where ${\rm div}(q) = 2q_1 - p_2 - p_3$, i.e. the original zero divisor $z_1+z_2+z_3$ of $q$ is chosen to be the special fiber $p_1 + 2q_1$. Note that $T$ can be realized as a (rational) section of $f\colon \bbP\QQ\BFF\to \BFF$ via 
$(E, p_1, p_2, p_3) \mapsto (E, [q], p_1, p_2, p_3)$ where ${\rm div}(q) = 2q_1 - p_2 - p_3$, which implies that $f_{*} T = \BFF$. 

\begin{lemma}
The following relation of divisor classes holds on $\bbP\QQ\BFF$: 
\begin{eqnarray}
\label{eq:eta-T}
\eta  =  \psi_{p_1} - T.  
\end{eqnarray}
\end{lemma}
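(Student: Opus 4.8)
The plan is to identify the line bundle $\OO(-1)$ on $\bbP\QQ\BFF$ with a natural sub-line-bundle of the (twisted) quadratic Hodge bundle restricted to $\BFF$, and then to recognize that sub-bundle concretely. Over a general point of $\BFF$ the fiber of $\OO(-1)$ is spanned by a quadratic differential $q$ with $\divisor(q) = z_1+z_2+z_3 - p_1 - p_2 - p_3$. Pulling back along the section-like divisor structure, I would first compare $\OO(-1)$ with the line bundle whose fiber is $H^0(E, K^{\otimes 2}(p_1+p_2+p_3-z_1-z_2-z_3))$-type data; more precisely, on $\bbP\QQ\BFF$ there is a universal zero divisor $Z = z_1+z_2+z_3$ (unordered) and a universal section cutting it out, which gives the identification $\OO(-1) \cong K^{\otimes 2}_f(p_1+p_2+p_3)(-Z)$ pulled back appropriately, where $K_f$ is the relative dualizing class of the universal curve. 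The cleanest route is to restrict everything to the section $T$, where the universal $q$ is forced to be the special one with $\divisor(q) = 2q_1 - p_2 - p_3$, and to use the standard fact that the tautological class of a projectivized Hodge-type bundle, restricted to the section corresponding to a specific differential, is computed by the vanishing order of that differential at the marked points.

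Concretely, I would carry out the steps as follows. First, set up the universal curve $\pi\colon \CC \to \bbP\QQ\BFF$ with sections $\sigma_{p_i}$, and write down the universal quadratic differential as a section of $\pi^*\OO(1)\otimes \omega_\pi^{\otimes 2}(\sum \sigma_{p_i})$ over $\CC$; its zero divisor is the universal $Z = z_1+z_2+z_3$. Second, restrict this to the section $\sigma_{p_1}$: since $\divisor(q) = z_1+z_2+z_3 - p_1 - p_2 - p_3$ and generically none of the $z_i$ equals $p_1$, the differential $q$ has a simple pole at $p_1$, which translates into a canonical isomorphism relating $\eta = c_1(\OO(-1))$ to $\psi_{p_1}$ up to the correction coming from the locus where some $z_i$ collides with $p_1$. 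Third, identify that correction locus: $z_i = p_1$ forces (by the cocritical geometry) the chosen fiber of the $g^1_3$ through $p_1$ to be exactly the special fiber $p_1 + 2q_1$, so the correction divisor is precisely $T$. Fourth, check the sign: on $T$ the differential degenerates so that its pole at $p_1$ disappears (replaced by $2q_1 - p_2 - p_3$), which is what produces the $-T$ rather than $+T$. Assembling, $\eta = \psi_{p_1} - T$.

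An alternative, perhaps more transparent, implementation is via excess/residue comparison along $T$: push forward and use $f_* T = \BFF$ together with the fact that $\OO(-1)|_T$ is canonically the line of the special differential $2q_1 - p_2 - p_3$, whose class on $\BFF$ one can compute directly and compare with $(\psi_{p_1} - T)|_T$; then conclude by noting that both sides agree on $T$ and are determined by their restriction to $T$ since $\bbP\QQ\BFF \to \BFF$ is a $\bbP^1$-bundle (so $\Pic$ is generated by $\eta$ and pullbacks) — writing $\eta = \psi_{p_1} + a T + (\text{pullback})$ and pinning down $a = -1$ and the absence of any extra pullback term by two independent restrictions (to $T$ and to a general fiber of $f$).

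The main obstacle I anticipate is bookkeeping the behavior of the universal quadratic differential exactly along $T$ and along the loci where the $z_i$ specialize: one must be careful that $T$ is entered with multiplicity one (not two, despite $z_1 = z_2 = q_1$ at a generic point of $T$ appearing as a double zero), and that no other boundary or excess contributions sneak in, for instance from fibers of $f$ over the boundary of $\BFF$ where the stable curve $E$ degenerates and $K$ becomes the dualizing sheaf of a nodal curve. Handling the nodal fibers — making sure the identification $\OO(-1) \cong \omega_\pi^{\otimes 2}(\sum \sigma_{p_i})(-Z)|_{\sigma_{p_1}}$ extends over the compactification without a correction term supported on $f^{-1}(\partial\BFF)$ — is the delicate point, and I would argue it using that $\bbP\QQ\BFF \to \BFF$ is flat with integral $\bbP^1$ fibers so that any such correction would have to be a multiple of a fiber class, which is then excluded by testing against a curve in a single fiber.
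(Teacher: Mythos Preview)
Your overall setup --- writing the universal quadratic differential as a section of $\pi^*\OO(1)\otimes\omega_\pi^{\otimes 2}(\sum\sigma_{p_i})$ over the universal curve, restricting to the section $\sigma_{p_1}$, and identifying the correction locus $\{z_i=p_1\}$ with $T$ via the observation that a fiber of the $g^1_3$ passing through $p_1$ must be $p_1+2q_1$ --- is exactly the paper's approach, and that part is fine.

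The gap is in your treatment of the boundary correction. In the paper's language, the relation on the universal curve reads $\pi^*\eta = 2c_1(\omega_\pi) + \sum P_i - Z - V$, where $V$ is the \emph{vertical vanishing divisor} consisting of components of degenerate curves on which $q$ is identically zero; after restricting to $P_1$ one must show $\pi_*(V\cdot P_1)=0$. You correctly flag this as the delicate point, but your proposed shortcut does not work. First, the claim that $f\colon\bbP\QQ\BFF\to\BFF$ is a flat $\bbP^1$-bundle with integral fibers over all of $\BFF$ is unjustified over the boundary; the closure of $\QQ\calF$ inside $\bbP\BQQ_{1,3}$ need not fiber so cleanly there. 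Second, and more seriously, even granting that structure your test is inadequate: a correction supported on $f^{-1}(\partial\BFF)$ is a \emph{pullback} class from $\BFF$, not a multiple of a fiber class, and intersecting with a curve lying in a single general fiber annihilates every pullback class --- so that test cannot detect or exclude such a term. (Restricting to a general fiber only confirms that the coefficient of $\eta$ on both sides matches, which you already know.)

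The paper closes this gap by a direct case-by-case analysis of each boundary type of $\BFF$ using the level-graph description of twisted quadratic differentials (the incidence variety compactification). For every degeneration one locates $p_1$ and the $z_i$ among the irreducible components and checks that either $p_1$ lies on a component where $q$ does not vanish identically (so there is no contribution to $V\cdot P_1$), or the relevant locus has codimension at least two in $\bbP\QQ\BFF$ and hence contributes the zero divisor class. This boundary analysis is the substantive content of the lemma and is missing from your proposal.
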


\begin{proof}
Let $\pi\colon \calE \to \bbP\QQ\BFF$ be the universal curve.  Then we have the relation 
\begin{eqnarray}
\label{eq:relation}
 \pi^{*}\eta =  2c_1(\omega_\pi) + P_1+P_2+P_3- Z - V 
 \end{eqnarray}
where $\omega_\pi$ is the relative dualizing line bundle, each $P_i\subset \calE$ is the section corresponding to the marked point $p_i$, $Z\subset \calE$ is the closure of the locus of the zeros $z_i$, and $V\subset \calE$ is the vertical vanishing divisor arising from components of reducible curves $E$ on which $q$ is identically zero (see~\cite[Section 3]{C}). Intersecting both sides of~\eqref{eq:relation} with $P_1$ and applying $\pi_{*}$, we conclude that 
\begin{eqnarray}
\label{eq:push}
\eta  =  \psi_{p_1} - \pi_{*}(Z\cdot P_1) - \pi_{*}(V\cdot P_1).  
\end{eqnarray}

We first analyze the intersection of $Z$ and $P_1$.  In this case $z_1+z_2+z_3 = p_1 + 2q_1$, i.e. ${\rm div}(q) = 2q_1 - p_2 - p_3$. It follows that 
$\pi_{*}(Z\cdot P_1) = T$. 

Next we show that $\pi_{*}(V\cdot P_1)$ is the zero divisor class. For this we argue using the level graph terminology as described in~\cite{BCGGM1, BCGGM2} for the incidence variety compactification (IVC) over $\bbP\QQ\BFF$. Suppose $\Gamma$ is a level graph where $p_1$ is contained in the lower level (i.e. a vanishing component of $q$ contains $p_1$). Observe that if $z_1, z_2, z_3$ are all contained in the lower level of $\Gamma$, then their variation is not recorded in the stable quadratic differential $q$, as $q$ is identically zero on the lower level. Hence in this case the corresponding locus has codimension higher than one in $\bbP\QQ\BFF$, which contributes zero as a divisor class (i.e. it gets contracted from the IVC to $\bbP\QQ\BFF$). 
  
Consider curves in $\BFF\cap \Delta_{0;\{1,2,3\}}$ where the rational component $R$ contains $p_1, p_2, p_3$.  
 Then $q_1, q_2, q_3$ are either all contained in $E$ or all contained in $R$ according to the two types of the intersection of $\BFF$ with $\Delta_{0;\{1,2,3\}}$. Suppose $q_1, q_2, q_3$ are all contained in $E$. Then the admissible $g^1_3$ has a semistable rational component $B$ connecting $E$ and $R$.  If $z_1, z_2, z_3$ are all contained in $B$, since $B$ is in lower level, this locus drops dimension from the IVC to $\bbP\QQ\BFF$. If $z_1, z_2$ are in $E$ and $z_3$ is in $R$, then the nodes are all poles of order two (i.e. horizontal in the level graph), hence this locus is irrelevant as it does not admit a vanishing component. Suppose $q_1, q_2, q_3$ are all contained in $R$. Then the admissible $g^1_3$ has a rational component $C$ joining $R$ such that the union of $C$ and $E$ gives the other part of the triple cover.  If $z_1, z_2, z_3$ are all contained in $R$, then $R$ is in lower level compared to $E$, hence this locus drops dimension from the IVC to $\bbP\QQ\BFF$. If $z_1, z_2$ are in $E$ and $z_3$ is in $C$, then after contracting the unstable component $C$, $z_3$ goes to $R$, and the node joining $R$ to $E$ is horizontal, hence this locus is irrelevant as it does not admit a vanishing component. 
 
Finally consider curves in $\BFF\cap \Delta_{0;\{ i, j\}}$ where the rational component $R$ contains $p_i, p_j$ and the irreducible rational nodal  component $N$ contains the remaining marked pole. In this case $N$ and $R$ are on the same side of the $g^1_3$.  Note that $z_1 + z_2 + z_3$ is given by the projection of $N$ from $s$.  If $z_1 + z_2 + z_3 \neq p_1 + p_2 + p_3$, then the node $r$ joining $R$ and $N$ is horizontal, hence the locus is irrelevant as it does not admit a vanishing component. If $z_1 + z_2 + z_3 = p_1 + p_2 + p_3$, then the stable quadratic differential $q$ has no pole in $N$, hence $N$ is on top level. Suppose $z_1 = p_i$ in $N$ and $z_2 + z_3 = p_j + p_k$ are in lower level for $\{i,j,k \} = \{1,2,3\}$.  Then $z_1$ is not varying on top level but $z_2, z_3$ vary in lower level in the $g^1_2$ determined by $2r\sim p_j + p_k$, hence the locus drops dimension from the IVC to $\bbP\QQ\BFF$.  

In summary, we have shown that $\pi_{*}(Z\cdot P_1) = T$ and $\pi_{*}(V\cdot P_1) = 0$. Hence the desired Equation~\eqref{eq:eta-T} follows from~\eqref{eq:push}.  
\end{proof}

Next we study the intersection $\eta T$.  Let $S$ be the IVC compactification of the locus $(E, [q], p_1, p_2, p_3, q_1)$ where 
$(E, [q], p_1, p_2, p_3) \in \calF$ with ${\rm div}(q) = 2q_1 - p_2 - p_3$. In other words, the signature of the ambient stratum of the locus is $(2,-1,-1,0)$ where $p_1$ is regarded as an ordinary marked point.  Let $g\colon S\to T$ be the map that forgets $q_1$, and $g$ is a birational morphism.  

Let $\Lambda_1$ be the level graph parameterizing an irreducible rational nodal curve $(N, p_3, q_3, q_1 =q_2 =n)$ union a rational curve $(R, p_1, p_2)$ at a node $r$ such that $q|_R = 0$ and $(q|_{N'}) = -p_3 - r - n_1 - n_2$, where $N' \cong \bbP^1$ is the normalization of $N$ after blowing up the node $n$ to $n_1, n_2$, and the resulting exceptional rational component carries $2q_1 - 3n_1 - 3n_2$ for the corresponding twisted quadratic differential. Let $\Lambda_2$ and $\Lambda_3$ be the (isomorphic) level graphs parameterizing an elliptic curve $E$ union each one of the two special rational components $(R, p_1, p_2, p_3, q_1, q_2, q_3)$ in the type (II) intersection of $\BFF$ with $\Delta_{0;\{1,2,3 \}}$ where $q|_E = (dz)^2$ and $q|_{R} = 0$. Denote by $\alpha_{\Lambda_i}$ the divisor class of the corresponding locus of $\Lambda_i$ in $S$, and by $\beta_{\Lambda_i}$ the corresponding image class in $T$.

\begin{lemma}
The following relation of cycle classes holds: 
\begin{eqnarray}
\label{eq:eta-S-T}
 \eta T = \psi_{p_2} T - \beta_{\Lambda_1} - 2 \beta_{\Lambda_2} - 2\beta_{\Lambda_3}. 
 \end{eqnarray}
\end{lemma}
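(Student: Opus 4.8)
The plan is to mimic the proof of the previous lemma, but now restricting everything to $T$ and using the description of $S$ as an IVC over the divisor $T\cong\BFF$. First I would pull back the relative dualizing sheaf relation~\eqref{eq:relation} to the universal curve over $S$ (or over a suitable modification of $T$), but specialized to the situation where the zero divisor of $q$ is the distinguished fiber $p_1+2q_1$. Since on $S$ the section $q_1$ is an honest marked point and $\divisor(q)=2q_1-p_2-p_3$, the analogue of~\eqref{eq:relation} reads $\pi^{*}\eta = 2c_1(\omega_\pi)+P_1+P_2+P_3-2Q_1-V'$, where $Q_1$ is the section of $q_1$ and $V'$ is the vertical vanishing divisor over $S$. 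Intersecting with the section $P_2$ (now it is more natural to use $p_2$ rather than $p_1$, since $p_2$ is a genuine pole of $q$ and $P_2\cdot Q_1=P_2\cdot P_1=P_2\cdot P_3=0$ generically) and pushing forward by $\pi_{*}$ gives $\eta|_{S} = \psi_{p_2}|_{S}-\pi_{*}(V'\cdot P_2)$. Then I would push forward by $g_{*}$ to land on $T$, using $g_{*}\psi_{p_2}|_S=\psi_{p_2}T$ (as $g$ is birational and $p_2$ is unaffected by forgetting $q_1$) and $g_{*}\eta|_S=\eta T$.

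The substance of the argument is then to identify $g_{*}\pi_{*}(V'\cdot P_2)$ with $\beta_{\Lambda_1}+2\beta_{\Lambda_2}+2\beta_{\Lambda_3}$. This is a boundary analysis on $S$ entirely parallel to the one carried out for $\pi_{*}(V\cdot P_1)$ in the previous lemma, except that now we are on the hypersurface $T$, so we are looking at codimension-two loci in $\bbP\QQ\BFF$ (equivalently, divisorial loci in $S$). I would run through the boundary strata of $\BFF$ — the type (I) and type (II) loci in $\BFF\cap\Delta_{0;\{1,2,3\}}$ and the loci in $\BFF\cap\Delta_{0;\{i,j\}}$ — and in each case determine, from the incidence-variety/level-graph data of~\cite{BCGGM1, BCGGM2}, whether the component carrying $p_2$ lies on a lower level (so $q$ vanishes there and $P_2$ meets $V'$) and with what multiplicity. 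For the $\Delta_{0;\{1,2,3\}}$-type (I) locus the computation should show that the contribution vanishes or is already accounted for, while the $\Lambda_1$ graph (where $R$ carrying $p_1,p_2$ is the vanishing component attached to the rational nodal $N$) contributes with coefficient $1$, matching $\beta_{\Lambda_1}$. For the two type (II) graphs $\Lambda_2,\Lambda_3$, where $E$ carries the differential $(dz)^2$ and the rational component $R$ (vanishing, containing $p_1,p_2,p_3,q_1,q_2,q_3$) is on the lower level, the coefficient $2$ should come from the vanishing order of $q$ along $R$ — concretely, $q$ has a zero of order two at the node from the twisted-differential normalization (the exceptional component carries $2q_1-3n_1-3n_2$ in $\Lambda_1$, and the analogous local orders in $\Lambda_2,\Lambda_3$ force the multiplicity-$2$ appearance of $P_2$ in $V'$), so the prefactor matches. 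I would make the multiplicities precise by a local analysis of the twisted differential near the relevant nodes, i.e. reading off the pole/zero orders prescribed by the level function.

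The main obstacle I anticipate is the bookkeeping of multiplicities in the vertical vanishing divisor $V'$: one must carefully distinguish which boundary loci of $S$ actually contribute a divisorial component to $\pi_{*}(V'\cdot P_2)$ (as opposed to dropping dimension, or having the relevant node be horizontal — a double pole — in which case there is no vanishing component at all), and then extract the correct integer coefficient from the twisted-differential data attached to each level graph $\Lambda_i$. The subtlety is that $S$ is the IVC of a stratum with signature $(2,-1,-1,0)$, so some of the level graphs involve semistable exceptional components carrying twisted quadratic differentials like $2q_1-3n_1-3n_2$, and one must be attentive to how these get contracted when mapping $S\to T$ and then $T\hookrightarrow\bbP\QQ\BFF$, since a component that is vanishing upstairs may be contracted downstairs. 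Once the correspondence between contributing strata and the classes $\beta_{\Lambda_i}$ is pinned down with the right coefficients $1,2,2$, equation~\eqref{eq:eta-S-T} follows by combining with the pushforward identity $g_{*}\bigl(\eta|_S-\psi_{p_2}|_S\bigr)=-g_{*}\pi_{*}(V'\cdot P_2)$.
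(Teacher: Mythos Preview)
Your proposal is correct and follows essentially the same route as the paper: establish the relation $g^{*}\eta = g^{*}\psi_{p_2} - \alpha_{\Lambda_1} - 2\alpha_{\Lambda_2} - 2\alpha_{\Lambda_3}$ on $S$ by intersecting the universal-curve relation with $P_2$ and identifying the vertical vanishing term via a level-graph boundary analysis, then apply $g_{*}$; the paper's proof says exactly this, only more tersely, and attributes the coefficient $2$ for $\Lambda_2,\Lambda_3$ to the twisting order of the edge in those level graphs. One harmless slip: on $S$ the signature is $(2,-1,-1,0)$ with $p_1$ ordinary, so $P_1$ should not appear in your analogue of~\eqref{eq:relation}; since you then intersect with $P_2$ and $P_1\cdot P_2=0$, this does not affect the outcome.
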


\begin{proof}
Using a similar analysis as in the proof of Equation~\eqref{eq:eta-T}, we can obtain the following relation of divisor classes on $S$: 
\begin{eqnarray}
\label{eq:eta-S}
 g^{*}\eta = g^{*}\psi_{p_2}  - \alpha_{\Lambda_1} - 2 \alpha_{\Lambda_2} - 2  \alpha_{\Lambda_3}
 \end{eqnarray}
 where the coefficient of $\alpha_{\Lambda_i}$ for $i = 2, 3$ has a factor $2$ from the twisting order of the edge in the level graph. Then Equation~\eqref{eq:eta-S-T} follows from applying $g_{*}$ to~\eqref{eq:eta-S}. 
\end{proof}

Finally we can work out the desired intersection numbers in~\eqref{eq:L}. 

\begin{proof}[Proof of Theorem~\ref{thm:L}]
Recall the map $f\colon \bbP\QQ\BFF\to \BFF$. We have $f_{*} T = \BFF$, $f^{*}\lambda_1 = \lambda_1$, and $f^{*} \psi_{p_i} = \psi_{p_i}$. Moreover, it is easy to see that $\eta \beta_{\Lambda_1} = \frac{1}{2}$ and $\eta \beta_{\Lambda_2} = \eta \beta_{\Lambda_3} = \frac{1}{12}$. Also, $\psi_{p_i} \beta_{\Lambda_j} = 0$ 
for any $i \in \{1,2 \}$ and $j\in \{ 1,2,3\}$ as $p_1$ and $p_2$ are marked points in a rational component with fixed moduli in all $\Lambda_i$, $\lambda_1 \beta_{\Lambda_1} = 0$ as both components in $\Lambda_1$ are rational, and $\lambda_1 \beta_{\Lambda_2} = \lambda_1 \beta_{\Lambda_3}  = \frac{1}{24}$. 
Then using Equations~\eqref{eq:eta-T} and~\eqref{eq:eta-S-T} we conclude that  
\begin{eqnarray*}
f_{*} (\eta^3) & = & f_{*} \Big( \eta^2 \psi_{p_1} - \eta^2 T \Big) \\ 
& = & f_{*} \Big(- \eta \psi_{p_2} T + \eta \beta_{\Lambda_1} + 2 \eta \beta_{\Lambda_2} + 2 \eta \beta_{\Lambda_3} -\eta \psi_{p_1} T + \eta \psi_{p_1}^2\Big) \\
& = & \frac{5}{6} + f_{*} \Big(  - \psi_{p_2}^2 T + \psi_{p_2} \beta_{\Lambda_1} + 2 \psi_{p_2} \beta_{\Lambda_2} + 2 \psi_{p_2} \beta_{\Lambda_3} \\
&  & -\psi_{p_1}\psi_{p_2}T + \psi_{p_1} \beta_{\Lambda_1} + 2 \psi_{p_1} \beta_{\Lambda_2} + 2 \psi_{p_1} \beta_{\Lambda_3} + \eta \psi_{p_1}^2  \Big) \\
& = &  \frac{5}{6} - \psi_{p_2}^2  \BFF - \psi_{p_1}\psi_{p_2} \BFF - \psi_{p_1}^2 \BFF 
\end{eqnarray*}
and 
\begin{eqnarray*}
f_{*} (\eta^2 \lambda_1) & = & f_{*} \Big(-\eta \lambda_1 T + \eta \lambda_1 \psi_{p_1}  \Big) \\ 
& = & f_{*}\Big(-\lambda_1\psi_{p_2}T + \lambda_1 \beta_{\Lambda_1} + 2 \lambda_1 \beta_{\Lambda_2} +2 \lambda_1 \beta_{\Lambda_3} + \eta\lambda_1 \psi_{p_1}\Big)   \\
& = & -\lambda_1\psi_{p_2}  \BFF  + \frac{1}{6} - \lambda_1\psi_{p_1} \BFF. 
\end{eqnarray*}

On $\BM_{1,3}$ we have the following intersection numbers: 
$$ \lambda_1 \psi_{p_1} \delta_{\irr} = 0, \quad \lambda_1 \psi_{p_1} \delta_{0; \{1,2\}}  =  \lambda_1 \psi_{p_1} \delta_{0; \{1,3\}} = 0, $$
$$ \lambda_1 \psi_{p_1} \delta_{0; \{2,3\}}  = \frac{1}{24}, \quad \lambda_1 \psi_{p_1} \delta_{0; \{1,2,3\}}  = \frac{1}{24}, $$
$$\psi_{p_1}^2 \delta_{\irr} = \frac{1}{2}, \quad \psi_{p_1}^2 \delta_{0; \{1,2\}}  =  \psi_{p_1}^2 \delta_{0; \{1,3\}} = 0, $$
$$ \psi_{p_1}^2 \delta_{0; \{2,3\}}  = \frac{1}{24}, \quad \psi_{p_1}^2 \delta_{0; \{1,2,3\}} = 0, $$
$$  \psi_{p_1} \psi_{p_2} \delta_{\irr} = 1, \quad \psi_{p_1}\psi_{p_2} \delta_{0; \{1,2,3\}} = 0, $$
$$\psi_{p_1}\psi_{p_2} \delta_{0; \{1,2\}}  = \psi_{p_1}\psi_{p_2} \delta_{0; \{1,3\}} =  \psi_{p_1}\psi_{p_2} \delta_{0; \{2,3\}} = 0. $$

Combining the above with the divisor class of $\BFF$ in~\eqref{eq:F}, it follows that 
$$ \psi_{p_1}^2 \BFF  =  \psi_{p_2}^2 \BFF = \frac{5}{6}, \quad \psi_{p_1}\psi_{p_2} \BFF = \frac{4}{3}, $$
$$ \lambda_1 \psi_{p_1} \BFF = \lambda_1 \psi_{p_2} \BFF = \frac{1}{3}. $$

Finally we obtain that 
$$ \int_{\bbP\QQ\BFF} \eta^3 = -\frac{13}{6} \quad {\rm and}\quad \int_{\bbP\QQ\BFF} \eta^2 \lambda_1  = -\frac{1}{2}, $$
thus verifying~\eqref{eq:L}. 
\end{proof}

%%%%%%%%%%%%%%%%%%%

\end{document}